\newcommand{\R}{\mathbb R}
\numberwithin{equation}{section}
\newtheorem{theorem}{Theorem}[section]
\newtheorem{proposition}[theorem]{Proposition}
\newtheorem{remark}[theorem]{Remark}
\newtheorem{lemma}[theorem]{Lemma}
\newtheorem{corollary}[theorem]{Corollary}
\begin{document}
\title[A note on local well-posedness]
{A note on local well-posedness of generalized KdV type equations with dissipative perturbations}

\author[X. Carvajal, M. Panthee]
{Xavier Carvajal, Mahendra Panthee} 

\address{Xavier Carvajal \newline
Instituto de Matem\'atica - UFRJ
Av. Hor\'acio Macedo, Centro de Tecnologia
 Cidade Universit\'aria, Ilha do Fund\~ao,
21941-972 Rio de Janeiro,  RJ, Brasil}
\email{carvajal@im.ufrj.br}

\address{Mahendra Panthee \newline
Departamento de Matem\'atica, UNICAMP\\
Rua Sergio Buarque de Holanda 651\\
13083-859, Campinas, SP, Brazil}
\email{mpanthee@ime.unicamp.br}

\thanks{This work was partially supported by CNPq and  FAPESP, Brazil.}
\subjclass[2010]{35A01, 35Q53}
\keywords{Initial value problem; well-posedness; KdV equation, dispersive-dissipative models}

\begin{abstract}
 In this note we report some local well-posedness results for the Cauchy problems  associated to generalized KdV type equations with dissipative perturbation  for  given data in the low regularity  $L^2$-based Sobolev spaces. The method of proof is based on the {\em contraction mapping principle} employed  in some appropriate time weighted spaces.
\end{abstract}

\maketitle


\allowdisplaybreaks

\section{Introduction}

In this brief note we are interested in reporting well-posedness results for the Cauchy problems associated to generalized Korteweg-de Vries (KdV) type equations with dissipative perturbations, viz., 
\begin{equation}\label{1eq:hs}
 \begin{cases}
  v_t+v_{xxx}+\eta Lv+(v^{k+1})_x=0, \quad x \in \mathbb{R}, \; t\geq 0, \quad k \in \mathbb{N},\\
     v(x,0)=v_0(x),
 \end{cases}
\end{equation}
and
\begin{equation}\label{1eq:hs-1}
 \begin{cases}
  u_t+u_{xxx}+\eta Lu+(u_x)^{k+1}=0, \quad x \in \mathbb{R}, \,t\geq 0,\quad k \in \mathbb{N},\\
  u(x,0)=u_0(x),
 \end{cases}
\end{equation}
where $\eta>0$ is a constant; $u=u(x, t)$, $v=v(x,t)$ are real valued functions and the linear operator $L$ is defined via the Fourier transform by 
$\widehat{Lf}(\xi)=-\Phi(\xi)\widehat{f}(\xi)$. For $p\in \R^+$, the Fourier symbol $\Phi(\xi)\in\R$  is measurable and has the form
\begin{align}\label{phi}
\Phi(\xi)=-|\xi|^p + \Phi_1(\xi),
\end{align}
where $|\Phi_1(\xi)|\leq C(1+|\xi|^q)$ with $0\leq q<p$.
 
The models under consideration are posed for $t\geq 0$. In order to deal with some estimates involving semi-group, we extend these models for all $t\in \R$. For this, we define
\begin{equation}\label{eta}
\eta (t)\equiv \eta \mathop{\rm sgn}(t)= \begin{cases}
 \eta & \text{if } t \geq 0 ,\\
 -\eta & \text{if } t <0,
\end{cases}
\end{equation}
and write \eqref{1eq:hs} and \eqref{1eq:hs-1} respectively in the form
\begin{equation}\label{eq:hs}
   \begin{cases}
    v_t+v_{xxx}+\eta(t) Lv+(v^{k+1})_x=0, \quad x, \; t \in \mathbb{R},\\
     v(0)=v_0,
   \end{cases}
\end{equation}
and
\begin{equation}\label{eq:hs-1}
 \begin{cases}
  u_t+u_{xxx}+\eta(t) Lu+(u_x)^{k+1}=0, \quad x, \; t \in \mathbb{R},\\
  u(x,0)=u_0(x).
 \end{cases}
\end{equation}

This sort of extension to consider the models posed for all $t\in\R$ has already been appeared in earlier works \cite{XC-MP2, XC-MP}. In what follows we will study the well-posedness issues for the Caucy problems \eqref{eq:hs} and \eqref{eq:hs-1}  for given data in the low regularity  Sobolev spaces $H^s(\mathbb{R})$.
We start by introducing two spaces that are suitable to handle the models in question. 

Let  $k\in \mathbb{N} $,  $|t| \leq T\leq 1$, $p> \frac32 k+1$ and $\gamma_k:=\dfrac{3k+2}{2(k+1)}$. To deal with the Cauchy problem  \eqref{eq:hs},  we define, for $s\ge 0$,
\begin{equation}\label{sp.01}
\|f\|_{\mathcal{X}_T^s}:= Ω\sup_{t\in [-T, T]}\Big\{\|f(t)\|_{H^s(\R)}+|t|^{\frac{\gamma_k}{p}}\Big(\|f(t)\|_{L^{2(k+1)}}
+\|\partial_x f(t)\|_{L^{2(k+1)}}+\|D_x^s\partial_xf(t)\|_{L^{2(k+1)}}\Big) \Big\},
\end{equation}
for  $-1<s<0$, 
\begin{equation}\label{sp.01neg}
\|f\|_{\mathcal{X}_T^s}:= Ω\sup_{t\in [-T, T]}\Big\{\|f(t)\|_{H^s(\R)}+|t|^{\frac{\gamma_k}{p}}\Big(\|f(t)\|_{L^{2(k+1)}}+\|D_x^s\partial_xf(t)\|_{L^{2(k+1)}}\Big) \Big\},
\end{equation}
and introduce the Banach space
\begin{equation}\label{bn.1}
\mathcal{X}_T^s:= \big\{ f\in C([-T, T]; H^s(\R)): \|f\|_{\mathcal{X}_T^s} <\infty\big\},
\end{equation}
where  $H^s(\R)$ is the usual $L^2$-based Sobolev space of order $s$.

Similarly, to work on the Cauchy problem \eqref{eq:hs-1}, we define, for $s\in \R$,
\begin{equation}\label{sp.02}
\|f\|_{\mathcal{Y}_T^s} :=\sup_{t\in[-T,T]}\Big\{\| f(t)\|_{H^s}+|t|^{\frac{\gamma_k}{p}}\Big(\|\partial_x f(t)\|_{L^{2(k+1)}}+\|D_x^s\partial_x f(t)\|_{L^{2(k+1)}}\Big)\Big\},
\end{equation}
and introduce, accordingly, the  Banach space
\begin{equation}\label{bn.2}
\mathcal{Y}_T^s:= \big\{  f\in C([-T, T]; H^s(\R)): \|f\|_{\mathcal{Y}_T^s} <\infty\big\}.
\end{equation}

Motivation behind the definition of these sort of spaces is the work of Dix \cite{Dix} where the author uses space with time weight to prove sharp well-posedness result for the Cauchy problem associated to the Burgers' equation  in $H^s(\R)$, $s>-\frac12$ by showing that uniqueness fails for data Sobolev regularity less than $-\frac12$. In our recent work \cite{XC-MP3},  similar spaces are used to get sharp local well-posedness results for KdV type equations. We notice that the space used in \cite{XC-MP3} needs restriction of the Sobolev regularity of the initial data not only from below but also from above. To overcome the restriction of Sobolev regularity from above, we used extra norms in the definitions of $\mathcal{X}_T^s$ and $\mathcal{Y}_T^s$ 	above. Although we could remove the regularity restriction from above with the introduction of extra norms, we could  not lower the regularity requirement of the initial data as much as we desired. This question will be addressed elsewhere.

Now, we state the main results of this work.
\begin{theorem}\label{teorp-1}
 Let $\eta>0$ be fixed, $k>0$ and $\Phi(\xi)$ be as given by \eqref{phi} with $p> \frac32k+1$ as the order of the leading term. Then  
the Cauchy problem \eqref{eq:hs} is locally well-posed for any data $v_0 \in H^s(\mathbb{R})$, whenever $s\geq -1$.
Moreover, the map $v_0\mapsto v$ is smooth from $H^s(\R)$ to $C([-T, T]; H^s(\R))\cap \mathcal{X}_T^s$ and $v\in C([-T, T]\backslash\{0\}; H^{\infty}(\R))$.
\end{theorem}

\begin{theorem}\label{teorp}
Let $\eta>0$ be fixed, $k>0$ and $\Phi(\xi)$ be as given by \eqref{phi} with $p> \frac32k+1$ as the order of the leading term. Then  
the Cauchy problem \eqref{eq:hs-1}  is locally well-posed for any data $u_0 \in H^s(\mathbb{R})$, whenever $s\geq 0$. Moreover, the map $u_0\mapsto u$ is smooth from $H^s(\R)$ to $C([-T, T]; H^s(\R))\cap \mathcal{Y}_T^s$ and $u\in C([-T, T]\backslash\{0\}; H^{\infty}(\R))$.
\end{theorem}

\begin{remark}
The results in Theorems \ref{teorp-1} and \ref{teorp} improve the known well-posedness results for the Cauchy problems \eqref{eq:hs} and \eqref{eq:hs-1} proved in \cite{XC-MP2}. However, in view of our recent work \cite{XC-MP3}, we believe that there is still room to get better results than the ones presented here. Due to the presence of higher order nonlinearity, the time weighted spaces  introduced in \cite{XC-MP3} are not good enough in this case. So, one expects to introduce  some new sort of spaces to cater higher order nonlinearity. Currently, we are working in this direction.
\end{remark}

In earlier works \cite{XC-MP2} (see also \cite{XC-MP}) we proved that the IVPs \eqref{eq:hs} and \eqref{eq:hs-1}  (with $k=1$) for given data in $H^s(\R)$ are locally well-posed whenever $s>-\frac34$ and $s>\frac14$ respectively. The IVPs \eqref{eq:hs} and \eqref{eq:hs-1} with $k= 2, 3, 4$ were also considered in  \cite{XC-MP2} to get local well-posedness results respectively for $s  >\frac14, -\frac16, 0$ and $s  >\frac54, \frac56, 1$. To obtain these results we followed the techniques developed by Bourgain \cite{bou:bou} and Kenig, Ponce and Vega \cite{kpv2:kpv2} (see also \cite{tao}). The main ingredients in the proof  are estimates of the integral equation associated to an extended IVP that is defined for all 
$t\in \mathbb{R}$. The principal tool in \cite{XC-MP2} and \cite{XC-MP} is the  usual Bourgain space associated to  the KdV equation 
instead of that associated to the linear part of the IVPs \eqref{eq:hs} and 
\eqref{eq:hs-1}.

A detailed explanation about the particular examples that belong to the class considered in \eqref{eq:hs} and \eqref{eq:hs-1} and the known well-posedness results about them are presented in our earlier works \cite{XC-MP3, XC-MP2}. 

This paper is organized as follows: In Section \ref{sec-2}, we prove 
some linear and nonlinear estimates.  Section \ref{sec-3} is dedicated to prove the main results. 

\section{Basic estimates}\label{sec-2}
In this section we derive some linear and nonlinear estimates that will be used to prove the main results of this work. Consider the Cauchy problem associated to the linear parts of  \eqref{eq:hs} and \eqref{eq:hs-1},
\begin{equation}\label{eq0.5}
   \begin{cases}
     w_t+w_{xxx}+\eta(t) Lw=0, \quad x, \; t\in \R,\\
     w(0)=w_0.
   \end{cases}
\end{equation}
The solution to \eqref{eq0.5} is given by $w(x,t)=V(t)w_0(x)$
where the  semi-group $V(t)$ is defined as
\begin{equation}\label{gV}
\widehat{V(t)w_0}(\xi)=e^{it\xi^3+\eta |t|\Phi(\xi)}\widehat{w_0}(\xi).
\end{equation}
In what follows, we prove some estimates satisfied by the semi-group defined in \eqref{gV}. Without loss of generality, we suppose $\eta =1$ from here onwards. We start with the following lemmas.
\begin{lemma}[Hausdorff-Young]\label{H-Y} Let $p_1\ge 2$ and $\frac1{p_1}+\frac1{q_1}=1$. Then
\begin{equation}\label{hy-1}
\|f\|_{L^{p_1}}\le C \|\widehat{f}\|_{L^{q_1}}.
\end{equation}
\end{lemma}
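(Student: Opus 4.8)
The plan is to deduce \eqref{hy-1} by interpolating the inverse Fourier transform $\mathcal{F}^{-1}$ between the two endpoint exponents $p_1=2$ and $p_1=\infty$ and then invoking the Riesz--Thorin convexity theorem. First I would record the endpoint bounds for $\mathcal{F}^{-1}$. At $p_1=\infty$ (so $q_1=1$), the Fourier inversion formula together with the triangle inequality gives $\|f\|_{L^{\infty}}\le C_1\|\widehat{f}\|_{L^1}$, that is, $\mathcal{F}^{-1}\colon L^1\to L^{\infty}$ is bounded. At $p_1=2$ (so $q_1=2$), Plancherel's theorem gives $\|f\|_{L^2}=C_2\|\widehat{f}\|_{L^2}$, so $\mathcal{F}^{-1}\colon L^2\to L^2$ is bounded; the constants $C_1,C_2$ depend only on the chosen normalization of the Fourier transform (recall the factor $1/\sqrt{2\pi}$ has been dropped, so these are harmless explicit powers of $2\pi$).

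Next I would apply the Riesz--Thorin interpolation theorem to the linear operator $\mathcal{F}^{-1}$ acting between the pairs $(L^1,L^{\infty})$ and $(L^2,L^2)$. Given $p_1\in[2,\infty]$, choose $\theta=2/p_1\in[0,1]$; then $\tfrac1{p_1}=\tfrac{1-\theta}{\infty}+\tfrac{\theta}{2}$ and the interpolated domain exponent is the $q_1$ with $\tfrac1{q_1}=\tfrac{1-\theta}{1}+\tfrac{\theta}{2}=1-\tfrac1{p_1}$, which is exactly the conjugacy relation assumed in the lemma. Riesz--Thorin then yields a constant $C=C(p_1)$ with $\|\mathcal{F}^{-1}g\|_{L^{p_1}}\le C\|g\|_{L^{q_1}}$ for all $g$ in a suitable dense class; taking $g=\widehat{f}$ and using $\mathcal{F}^{-1}\widehat{f}=f$ gives \eqref{hy-1}.

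There is no substantive obstacle here: the argument is a direct application of Riesz--Thorin to two classical endpoint estimates. The only points demanding any care are (i) the density/limiting step, i.e.\ first proving the inequality for Schwartz functions (or for $g\in L^1\cap L^2$) and then extending $\mathcal{F}^{-1}$ continuously to all of $L^{q_1}$ in the range $1\le q_1\le 2$, and (ii) the bookkeeping of the multiplicative constant coming from the unnormalized Fourier transform. Equivalently, one may simply quote the standard Hausdorff--Young inequality $\|\widehat{g}\|_{L^{p_1}}\le C\|g\|_{L^{q_1}}$ valid for $1\le q_1\le 2$ and apply it with $g=\mathcal{F}^{-1}f$, observing that $\mathcal{F}^{-1}f$ and $\widehat{f}$ have equal $L^{q_1}$ norms up to the same normalization constant.
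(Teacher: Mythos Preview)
Your argument is correct: this is the standard Riesz--Thorin interpolation proof of the Hausdorff--Young inequality, and the bookkeeping with $\theta=2/p_1$ and the density step are handled appropriately. The paper itself offers no proof of this lemma---it is simply quoted as the classical Hausdorff--Young inequality---so there is nothing to compare your approach against; you have supplied exactly the textbook justification the authors took for granted.
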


\begin{lemma}\label{xav4}
There exists a sufficiently large number $M>0$ such that the following estimates
\begin{equation}\label{xavi9}
\Phi(\xi)=-|\xi|^{p}+\Phi_1(\xi) <-1,
\end{equation}
\begin{equation}\label{eq0.6}
\frac{|\Phi_1(\xi)|}{|\xi|^p}\leq \frac12,
\end{equation}
and 
\begin{equation}\label{eq0.6x34}
|\Phi(\xi)| \ge  \frac{|\xi|^p}2,
\end{equation}
hold true for all   $|\xi| \geq M$.
\end{lemma}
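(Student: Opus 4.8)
The plan is to exploit the strict inequality $q<p$ in the hypothesis $|\Phi_1(\xi)|\le C(1+|\xi|^q)$, which makes $\Phi_1$ genuinely lower order than the leading term $-|\xi|^p$ in \eqref{phi}. First I would observe that for $|\xi|\ge 1$ one has $1+|\xi|^q\le 2|\xi|^q$ (since $q\ge 0$), hence
\[
\frac{|\Phi_1(\xi)|}{|\xi|^p}\le \frac{C(1+|\xi|^q)}{|\xi|^p}\le 2C\,|\xi|^{q-p},
\]
and because $q-p<0$ the right-hand side tends to $0$ as $|\xi|\to\infty$. Therefore there exists $M_1\ge 1$ such that \eqref{eq0.6} holds for every $|\xi|\ge M_1$.

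Next, for $|\xi|\ge M_1$ I would combine \eqref{phi} with the triangle inequality and \eqref{eq0.6} to get
\[
\Phi(\xi)=-|\xi|^p+\Phi_1(\xi)\le -|\xi|^p+|\Phi_1(\xi)|\le -|\xi|^p+\tfrac12|\xi|^p=-\tfrac12|\xi|^p<0 .
\]
This already yields \eqref{eq0.6x34}, since $|\Phi(\xi)|=-\Phi(\xi)\ge \tfrac12|\xi|^p$ on this region. To secure \eqref{xavi9} as well, it is enough to also require $\tfrac12|\xi|^p>1$, i.e.\ $|\xi|^p>2$; so I would set $M:=\max\{M_1,\,2^{1/p}+1\}$ (or any $M\ge M_1$ with $M^p>2$). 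Then for all $|\xi|\ge M$ we get $\Phi(\xi)\le -\tfrac12|\xi|^p<-1$, and the three assertions \eqref{xavi9}, \eqref{eq0.6}, \eqref{eq0.6x34} hold simultaneously with this single choice of $M$.

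There is essentially no obstacle in this argument; the only point requiring a little care is to pick $M$ large enough to satisfy all three requirements at once, which is achieved by taking the maximum of the finitely many thresholds above. I note that the boundedness from above of $\Phi$ mentioned after \eqref{phi} is not needed for this lemma, although it follows from the same estimate on $\{|\xi|\ge M\}$ together with the continuity of $\Phi$ on the compact set $\{|\xi|\le M\}$.
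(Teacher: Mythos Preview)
Your argument is correct and follows essentially the same route as the paper: both exploit that $|\Phi_1(\xi)|/|\xi|^p\to 0$ as $|\xi|\to\infty$ to obtain \eqref{eq0.6}, and then deduce \eqref{xavi9} and \eqref{eq0.6x34} from the resulting inequality $\Phi(\xi)\le -\tfrac12|\xi|^p$ (the paper phrases \eqref{xavi9} via the equivalent limit $\frac{\Phi_1(\xi)+1}{|\xi|^p}\to 0$, but the content is the same). Your explicit choice of $M$ and the remark about picking a single threshold are minor elaborations, not a different method.
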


\begin{proof}
Note that,
\begin{equation}\label{eq.m100}
\lim_{|\xi| \to \infty} \frac{\Phi_1(\xi) +1}{|\xi|^p}=0 \quad {\textrm{and}}\quad \lim_{|\xi| \to \infty} \frac{|\Phi_1(\xi)|}{ |\xi|^p}=0.
\end{equation}
 The estimates \eqref{xavi9} and \eqref{eq0.6} follow respectively from the first and the second relation in \eqref{eq.m100}.

For $|\xi|\geq M$, using \eqref{xavi9} and \eqref{eq0.6}, one has that
\begin{equation}
|\Phi(\xi)|= |\xi|^p-\Phi_1(\xi) \ge \dfrac{|\xi|^p}{2},
\end{equation}
which proves \eqref{eq0.6x34}.
\end{proof}

\begin{lemma}\label{lem2.2}
Let $|t|\leq  T$ and $\Phi(\xi)$ be as defined in \eqref{phi}. Then $\Phi(\xi)$ is bounded from above and 
\begin{equation}\label{eq0.11}
\|e^{|t|\Phi(\xi)}\|_{L^{\infty}} \leq e^{TC_M}.
\end{equation}
\end{lemma}

\begin{proof}
Using \eqref{xavi9} in Lemma \ref{xav4}, we see that  there is $M>1$ large enough such that $\Phi(\xi)<-1$. Therefore,
\begin{equation}\label{eq.m101}
e^{|t|\Phi(\xi)} \leq e^{-|t|}\leq 1, \qquad \forall\;\;|\xi|\geq M.
\end{equation} 
 For $|\xi| <M$, it is easy to show that $\Phi(\xi) <C_M$, and consequently 
\begin{equation}\label{eq.m102}
e^{|t|\Phi(\xi)} \leq e^{TC_M}\quad \forall\;\; |\xi| <M.
\end{equation}
 Combining \eqref{eq.m101} and \eqref{eq.m102} we conclude the proof.
\end{proof}

\begin{lemma}\label{lem-smooth} 
Let $v_0\in H^s(\R)$ and $V(t)$ be defined as in \eqref{gV}, then 
$$ V(\cdot)v_0\in C(\R; H^s(\R))\cap C(\R\backslash\{0\}; H^{\infty}(\R)).$$
\end{lemma}
\begin{proof}
It is enough to show that $V(t)v_0\in H^{s'}(\R)$ whenever $s'>s$. We have,
\begin{equation}\label{sm0.1}
\begin{split}
\|V(t)v_0\|_{H^{s'}} &=\|\langle\xi\rangle^{s'}e^{it\xi^3+|t|\Phi(\xi)}\widehat{v_0}(\xi)\|_{L^2}\\
&=\|\langle\xi\rangle^{s}\widehat{v_0}(\xi)\langle\xi\rangle^{s'-s}e^{|t|\Phi(\xi)}\|_{L^2}\\
&\leq \|\langle\xi\rangle^{s'-s}e^{|t|\Phi(\xi)}\|_{L^{\infty}}\|v_0\|_{H^s},
\end{split}
\end{equation}
where we have used the notation $\langle x\rangle :=1+|x|$. 

Let $M\gg1$ be as in Lemma \ref{xav4}, then one gets
\begin{equation}\label{sm0.2}
\begin{split}
\|\langle\xi\rangle^{s'-s}e^{|t|\Phi(\xi)}\|_{L^{\infty}}
&\leq \|\langle\xi\rangle^{s'-s}e^{|t|\Phi(\xi)}\|_{L^{\infty}(|\xi|\leq M)}
+\|\langle\xi\rangle^{s'-s}e^{|t|\Phi(\xi)}\|_{L^{\infty}(|\xi|> M)}\\
&\leq  C_M + \|e^{-\frac{|\xi|^p}2|t|}\langle\xi\rangle^{s'-s}\|_{L^{\infty}} <\infty, \qquad t\in \R\backslash\{0\}.
\end{split}
\end{equation}
One can use the dominated convergence theorem to prove continuity.
\end{proof}

\begin{lemma}\label{linear1x} Let $s>-1-\frac{k}{2(k+1)}$, $|t|\le T \le 1$ and $p> 0$, then we have
\begin{equation}\label{cp.001}
|t|^{\frac{\gamma_k}{p}}\|\partial_x D_x^s V(t)w_0\|_{L^{2(k+1)}} \le C \|w_0\|_{H^s}
\end{equation}
and
\begin{equation}\label{cp.002}
|t|^{\frac{\gamma_k}{p}}\|V(t)w_0\|_{L^{2(k+1)}} \le C \|w_0\|_{H^{-1}}.
\end{equation}
\end{lemma}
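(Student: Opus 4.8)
The plan is to treat both estimates as purely \emph{dissipative} (parabolic) smoothing bounds: since $|e^{it\xi^3}|=1$ and $\Phi$ is real valued, the dispersive factor plays no role and is simply discarded, so that the whole gain comes from the decay of $e^{t\Phi(\xi)}$ forced by the leading term $-|\xi|^p$. Write $r:=2(k+1)\ge 2$ and let $r'=\frac{2(k+1)}{2k+1}$ be its conjugate exponent. By the Hausdorff--Young inequality (Lemma~\ref{H-Y}),
\begin{equation*}
\|\partial_x V(t)w_0\|_{L^r}\le C\,\big\||\xi|\,e^{it\xi^3+t\Phi(\xi)}\widehat{w_0}(\xi)\big\|_{L^{r'}_\xi}=C\,\big\||\xi|\,e^{t\Phi(\xi)}\widehat{w_0}(\xi)\big\|_{L^{r'}_\xi},
\end{equation*}
and likewise for \eqref{cp.002} with the factor $|\xi|$ removed.

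Next I would apply H\"older's inequality in $\xi$ with exponents $2$ and $\theta$, where $\frac1{r'}=\frac12+\frac1\theta$, i.e.\ $\theta=\frac{2(k+1)}{k}$, to split off the factor $\|\widehat{w_0}\|_{L^2}=\|w_0\|_{L^2}$ (Plancherel). This reduces \eqref{cp.001} to the multiplier bound $\big\||\xi|\,e^{t\Phi(\xi)}\big\|_{L^\theta_\xi}\le C\,t^{-\gamma_k/p}$ and \eqref{cp.002} to $\big\|e^{t\Phi(\xi)}\big\|_{L^\theta_\xi}\le C\,t^{-1/(p\theta)}$, for $0<t\le 1$. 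To prove these, split the integral at $|\xi|=M$ with $M$ as in Lemma~\ref{xav4}. On $|\xi|\le M$ the symbol $\Phi$ is bounded above (Lemma~\ref{lem2.2}), so that region contributes only a constant to the $L^\theta$ norm, which is harmless because $t\le 1$ gives $t^{\gamma_k/p}\le 1$ (resp.\ $t^{1/(p\theta)}\le 1$). On $|\xi|>M$ we use $\Phi(\xi)\le-\frac12|\xi|^p$ from Lemma~\ref{xav4} to bound the relevant piece by $\big\||\xi|\,e^{-t|\xi|^p/2}\big\|_{L^\theta(\R)}$ (resp.\ $\big\|e^{-t|\xi|^p/2}\big\|_{L^\theta(\R)}$), and the rescaling $u=t^{1/p}\xi$ evaluates these to $C\,t^{-(\theta+1)/(p\theta)}$ and $C\,t^{-1/(p\theta)}$ respectively, the residual $\xi$-integrals being finite.

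The proof then comes down to one bookkeeping identity: $\frac{\theta+1}{\theta}=1+\frac{k}{2(k+1)}=\frac{3k+2}{2(k+1)}=\gamma_k$, so that the exponent $(\theta+1)/(p\theta)$ produced by the Gaussian-type integral is exactly $\gamma_k/p$, which is what \eqref{cp.001} demands. For \eqref{cp.002} no such balancing is needed: the surplus power $t^{\gamma_k/p-1/(p\theta)}$ stays bounded on $0<t\le 1$ because $\gamma_k>\frac1\theta$ (indeed $3k+2>k$). Collecting the low- and high-frequency contributions yields \eqref{cp.001} and \eqref{cp.002}. There is no genuine obstacle here; the only point requiring care is choosing the H\"older exponent $\theta$ and the scaling so that the power of $t$ matches $\gamma_k/p$ exactly, and it is worth noting that the hypothesis $p>\frac32 k+1$ is not actually used in this lemma (all the integrals converge for every $p>0$); it enters only in the nonlinear part of the argument.
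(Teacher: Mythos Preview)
Your proof is correct and follows essentially the same route as the paper: Hausdorff--Young followed by H\"older with exponent $\theta=\frac{2(k+1)}{k}$, then a low/high frequency split at $|\xi|=M$ using Lemma~\ref{xav4} and the rescaling $\xi\mapsto t^{-1/p}\xi$ on the high-frequency part. Your observation that the hypothesis $p>\frac32 k+1$ is not actually needed here (only later, to make $\omega_k>0$ in the nonlinear estimate) is also correct.
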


\begin{proof}
By Hausdorff-Young and H\"older inequalities, we obtain
\begin{equation}\label{lemnorm1}
\|\partial_x  D_x^s V(t)w_0\|_{L^{2(k+1)}} \le C \|e^{|t|\Phi(\xi)} \xi |\xi|^s \widehat{w_0}\|_{L^{\frac{2(k+1)}{2k+1}}} \le C\Big\|e^{|t|\Phi(\xi)}  \dfrac{\xi |\xi|^s}{\langle \xi \rangle^s}\Big\|_{L^{\frac{2(k+1)}{k}}} \| \langle \xi \rangle^s\widehat{w_0}\|_{L^2}.
\end{equation}
Let $M$ as in Lemma \ref{xav4}, then we have
\begin{equation}\label{m0.01sma0}
\begin{split}
\Big\| \dfrac{\xi |\xi|^s}{\langle \xi \rangle^s} e^{|t|\Phi(\xi)}\Big\|_{L^{\frac{2(k+1)}{k}}} &\leq \Big\| \dfrac{\xi |\xi|^s}{\langle \xi \rangle^s} e^{|t|\Phi(\xi)}\chi_{\{|\xi|\leq M\}}\Big\|_{L^{\frac{2(k+1)}{k}}} +
\Big\|\dfrac{\xi |\xi|^s}{\langle \xi \rangle^s} e^{|t|\Phi(\xi)}\chi_{\{|\xi|> M\}}\Big\|_{L^{\frac{2(k+1)}{k}}}\\
& \le  C_M+ \left( \int_{\R} |\xi|^{\frac{2(k+1)}{k}} e^{-|t|(\frac{k+1}k)|\xi|^p} d \xi \right)^{\frac{k}{2(k+1)}}.
\end{split}
\end{equation}
Using a change of variable $|t|^{-1/p} \xi:=x$, we get
\begin{equation}\label{m0.01sma01}
\begin{split}
\Big\|\dfrac{\xi |\xi|^s}{\langle \xi \rangle^s} e^{|t|\Phi(\xi)}\Big\|_{L^{\frac{2(k+1)}{k}}} & \le  C_M+ \dfrac{1}{|t|^{\frac{\gamma_k}{p}}}
\left( \int_{\R} |x|^{\frac{2(k+1)}{k}} e^{-(\frac{k+1}k)|x|^p} d x \right)^{\frac{k}{2(k+1)}}.
\end{split}
\end{equation}

Since the integral in \eqref{m0.01sma01} is finite, inserting \eqref{m0.01sma01} in \eqref{lemnorm1} and multiplying by $|t|^{\frac{\gamma_k}{p}}$ we get the required estimate  \eqref{cp.001}. 

With a similar argument as above (considering $s=0$ in \eqref{cp.001} ), one can obtain
\begin{equation}\label{xxx1}
|t|^{\frac{\gamma_k}{p}}\|D_x  V(t)w_0\|_{L^{2(k+1)}} \le C \|w_0\|_{L^2}.
\end{equation}
Analogously, we can  prove
\begin{equation}\label{xxx2}
|t|^{\frac{\gamma_k}{p}}\| V(t)w_0\|_{L^{2(k+1)}} \le C \|w_0\|_{L^2}.
\end{equation}
Now, using \eqref{xxx1} and \eqref{xxx2}, we obtain 
\begin{equation}\label{rev-01}
|t|^{\frac{\gamma_k}{p}}\| (1+D_x)V(t)w_0 \|\le C \|w_0\|_{L^2}.
\end{equation}
This final estimate \eqref{rev-01}  is equivalent to \eqref{cp.002} and this completes the proof.
\end{proof}

\begin{corollary}\label{cor-rev-1}
 Let $s>-1-\frac{k}{2(k+1)}$,\,  $|\tau|\leq T$, \, $|t|\le T \le 1$ and $p> 0$, then we have
\begin{equation}\label{cp.001}
|t-\tau|^{\frac{\gamma_k}{p}}\|\partial_x D_x^s V(t-\tau)f(t, \cdot)\|_{L^{2(k+1)}} \le C \|f(t, \cdot)\|_{H^s}
\end{equation}
and
\begin{equation}\label{cp.002}
|t-\tau|^{\frac{\gamma_k}{p}}\|V(t-\tau)f(t, \cdot)\|_{L^{2(k+1)}} \le C \|f(t, \cdot)\|_{H^{-1}}.
\end{equation}
\end{corollary}

\begin{lemma}\label{mm.1}
Let $V(t)$ be as defined in \eqref{gV}, $|t|\le T \le 1$ and $p> \frac32 k+1$  then for all $s\geq -1$, we have
\begin{equation}\label{mm.2}
\|V(t)w_0\|_{\mathcal{X}_T^s} \leq C\|w_0\|_{H^s}
\end{equation}
and for all $s\geq 0$, we have
\begin{equation}\label{linear1}
\|V(t)w_0\|_{\mathcal{Y}^s_T} \le C \|w_0\|_{H^s}.
\end{equation}
\end{lemma}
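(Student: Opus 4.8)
The plan is to estimate separately each quantity appearing inside the suprema that define $\|\cdot\|_{\mathcal{X}_T^s}$ and $\|\cdot\|_{\mathcal{Y}_T^s}$, applied to $f=V(t)w_0$, and then take $\sup_{t\in[0,T]}$ (membership of $V(\cdot)w_0$ in $C([0,T];H^s)$ being already granted by Lemma \ref{lem-smooth}). The $H^s$-term is immediate: from \eqref{gV}, $\|V(t)w_0\|_{H^s}=\|\langle\xi\rangle^{s}e^{it\xi^3+t\Phi(\xi)}\widehat{w_0}(\xi)\|_{L^2}\le\|e^{t\Phi(\xi)}\|_{L^\infty}\|w_0\|_{H^s}$, which by Lemma \ref{lem2.2} (estimate \eqref{eq0.11}) is bounded by $e^{TC_M}\|w_0\|_{H^s}\le C\|w_0\|_{H^s}$, since $T\le1$.

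Each of the remaining summands has the form $t^{\gamma_k/p}\|m(D)V(t)w_0\|_{L^{2(k+1)}}$ with Fourier multiplier $m(\xi)\in\{1,\,i\xi,\,|\xi|^{s}i\xi\}$ for $\mathcal{X}_T^s$, and $m(\xi)\in\{i\xi,\,|\xi|^{s}i\xi\}$ for $\mathcal{Y}_T^s$. Following the scheme of the proof of Lemma \ref{linear1x} — Hausdorff--Young (Lemma \ref{H-Y}, $p_1=2(k+1)$) followed by Hölder's inequality with exponents $\tfrac{2(k+1)}{k}$ and $2$ — one obtains
$$\|m(D)V(t)w_0\|_{L^{2(k+1)}}\le C\big\|m(\xi)e^{t\Phi(\xi)}\widehat{w_0}\big\|_{L^{2(k+1)/(2k+1)}}\le C\big\|m(\xi)\langle\xi\rangle^{-s}e^{t\Phi(\xi)}\big\|_{L^{2(k+1)/k}}\|w_0\|_{H^s},$$
so it suffices to bound $t^{\gamma_k/p}\,\|m(\xi)\langle\xi\rangle^{-s}e^{t\Phi(\xi)}\|_{L^{2(k+1)/k}}$ uniformly for $0<t\le T\le1$.

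For this I would split the frequency axis at the level $M$ of Lemma \ref{xav4}. On $\{|\xi|\le M\}$ one has $e^{t\Phi(\xi)}\le e^{TC_M}$ by Lemma \ref{lem2.2}, and the weight $m(\xi)\langle\xi\rangle^{-s}$ is bounded there (for $m(\xi)=|\xi|^{s}i\xi$ it behaves like $|\xi|^{s+1}$ near the origin, which is $L^{2(k+1)/k}$-integrable in the relevant range of $s$), so this contribution is $\le C_M$ and the factor $t^{\gamma_k/p}\le1$ costs nothing. On $\{|\xi|>M\}$, \eqref{eq0.6} and \eqref{eq0.6x34} give $e^{t\Phi(\xi)}\le e^{-t|\xi|^{p}/2}$, and $|\xi|^{s}\langle\xi\rangle^{-s}$ is bounded there, so $|m(\xi)\langle\xi\rangle^{-s}|\le C\langle\xi\rangle^{a}$ with $a=0$ for $m(\xi)=1$ and $a=1$ for $m(\xi)=|\xi|^{s}i\xi$ (the weights $|\xi|^{s}$ and $\langle\xi\rangle^{-s}$ cancelling at high frequency, so only the genuine derivative survives), while $m(\xi)=i\xi$ is handled with $a=1$ when $s\ge0$, and for $s<0$ by first splitting $w_0$ into low- and high-frequency parts and absorbing the surplus factor $\langle\xi\rangle^{-s}$ into the decay $e^{-t|\xi|^{p}/2}$. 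The change of variables $\xi=t^{-1/p}x$, exactly as in \eqref{m0.01sma01}, then gives $\|\langle\xi\rangle^{a}e^{-t|\xi|^{p}/2}\chi_{\{|\xi|>M\}}\|_{L^{2(k+1)/k}}\le C\,t^{-\frac1p(a+\frac{k}{2(k+1)})}$, and since $\gamma_k=1+\tfrac{k}{2(k+1)}$ the product $t^{\gamma_k/p}\cdot t^{-\frac1p(a+k/(2(k+1)))}=t^{(1-a)/p}$ is $\le1$ for $a\le1$ and $0<t\le1$ (with a strictly positive power to spare when $a=0$). Taking the supremum over $t$ and adding up the finitely many contributions gives \eqref{mm.2}; the proof of \eqref{linear1} is identical, using only $m(\xi)\in\{i\xi,\,|\xi|^{s}i\xi\}$.

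The step I expect to be the main obstacle is precisely this bookkeeping of the powers of $t$ in the high-frequency region — ensuring that $(1-a)/p\ge0$ for the terms carrying a derivative. This is exactly where the value $\gamma_k=\tfrac{3k+2}{2(k+1)}$, equivalently the hypothesis $p>\tfrac32k+1$ (which forces $\gamma_k/p<\tfrac1{k+1}$), is used, together with the cancellation of the weights $|\xi|^{s}$ and $\langle\xi\rangle^{-s}$ in the term $\|D_x^{s}\partial_x V(t)w_0\|_{L^{2(k+1)}}$; everything away from high frequencies is elementary since there $e^{t\Phi}$ is merely bounded.
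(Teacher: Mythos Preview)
Your argument is correct and follows the same route as the paper: Hausdorff--Young, H\"older, and the low/high-frequency split at the level $M$ of Lemma~\ref{xav4}. The paper's version is simply more economical --- instead of re-deriving the multiplier bound for each $m(\xi)$, it commutes $D_x^s$ through $V(t)$ (writing $D_x^s\partial_xV(t)w_0=\partial_xV(t)(D_x^sw_0)$) and quotes \eqref{cp.001}--\eqref{cp.002} of Lemma~\ref{linear1x} directly, landing on $\|D_x^sw_0\|_{L^2}$ and $\|w_0\|_{L^2}$ rather than inserting the weight $\langle\xi\rangle^{-s}$ inside the H\"older step as you do.
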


\begin{proof}
We provide a detailed proof for \eqref{mm.2}, the proof of \eqref{linear1} follows similarly. We start with the first component of $\mathcal{X}_T^s$-norm. First, note that
\begin{equation}\label{eq0.10}
\|V(t)w_0\|_{H^s} = \|\langle\xi\rangle^se^{|t|\Phi(\xi)}\widehat{w_0}(\xi)\|_{L^2} \leq \|e^{|t|\Phi(\xi)}\|_{L^{\infty}}\|w_0\|_{H^s}.
\end{equation}
From \eqref{eq0.11} and \eqref{eq0.10}, we can conclude
\begin{equation}\label{cp.1}
\|V(t)w_0\|_{H^s} \leq e^{TC_M}\|w_0\|_{H^s}.
\end{equation}

For the next components of the $\mathcal{X}_T^s$-norm, we use \eqref{cp.001} and \eqref{cp.002} from Lemma \ref{linear1x} to get, for $s\geq -1$
\begin{equation}\label{cp.3}
|t|^{\frac{\gamma_k}p}\|D_x^s \partial_x V(t)w_0\|_{L^{2(k+1)}}+|t|^{\frac{\gamma_k}p}\|V(t)w_0\|_{L^{2(k+1)}} \leq C\|w_0\|_{H^{s}}.
\end{equation}

Combining \eqref{cp.1} and \eqref{cp.3} we get the required estimate \eqref{mm.2}. 
\end{proof}

\begin{lemma}\label{mm.03}
Let $V(t)$ be as defined in \eqref{gV}, $ |t|\le T \le 1$ and $p> \frac32 k+1$,  then the following estimates hold true.
\begin{equation}\label{mm.04}
\Big\|\int_0^tV(t-\tau) \partial_x(v^{k+1})(\tau)d\tau\Big\|_{{\mathcal{X}_T^s}} \leq T^{\omega_k}\|v\|_{\mathcal{X}_T^s}^{k+1}, \quad \forall\: s\geq -1,
\end{equation}
and
\begin{equation}\label{mm.05}
\Big\|\int_0^tV(t-\tau) (\partial_xu)^{k+1}(\tau)d\tau\Big\|_{{\mathcal{Y}_T^s}} \leq T^{\omega_k}\|u\|_{\mathcal{Y}_T^s}^{k+1} \quad \forall\: s\geq 0,
\end{equation}
where $\omega_k=\frac{2p-3k-2}{2p}>0$.
\end{lemma}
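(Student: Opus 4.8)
The plan is to estimate the Duhamel term component-by-component against each of the norms defining $\mathcal{X}_T^s$ (respectively $\mathcal{Y}_T^s$), using the smoothing estimates of Lemma~\ref{linear1x} together with the uniform bound $\|e^{t\Phi(\xi)}\|_{L^\infty}\le e^{TC_M}$ from Lemma~\ref{lem2.2}. The key structural observation is that the nonlinearity $\partial_x(v^{k+1})$ (resp. $(\partial_x u)^{k+1}$) can always be written with one spatial derivative sitting outside a product, so that applying $V(t-\tau)$ converts that derivative into a factor $(t-\tau)^{-\gamma_k/p}$ via \eqref{cp.001}, while the product $v^{k+1}$ is controlled in $L^2_x$ by $\|v(\tau)\|_{L^{2(k+1)}_x}^{k+1}\le (\tau^{-\gamma_k/p}\|v\|_{\mathcal{X}_T^s})^{k+1}$ from the definition of the norm. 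Thus, for the base $H^s$-component with $s\ge -1$, I would write $\partial_x(v^{k+1}) = D_x^{-1}\partial_x \cdot \partial_x(v^{k+1})$ when $s<0$, or more simply estimate $\|J^s\int_0^t V(t-\tau)\partial_x(v^{k+1})d\tau\|_{L^2}$ by Minkowski's integral inequality, pulling $J^s$ through $V(t-\tau)$, and bounding $\|J^s\partial_x V(t-\tau)f\|_{L^2}$ — here one uses that for $s\ge -1$ the symbol $\langle\xi\rangle^s|\xi| e^{(t-\tau)\Phi(\xi)}$ is bounded by $C(t-\tau)^{-\gamma_k/p}$ (low frequencies use $s\ge -1$ so $\langle\xi\rangle^s|\xi|$ stays bounded; high frequencies use the Gaussian-type decay and the change of variables as in \eqref{m0.01sma01}).

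After that, I would handle the three weighted components of the $\mathcal{X}_T^s$-norm: $t^{\gamma_k/p}\|\cdot\|_{L^{2(k+1)}_x}$, $t^{\gamma_k/p}\|\partial_x\cdot\|_{L^{2(k+1)}_x}$, and $t^{\gamma_k/p}\|D_x^s\partial_x\cdot\|_{L^{2(k+1)}_x}$. For each, apply Minkowski, then Lemma~\ref{linear1x} (or its $D_x^s$-conjugated version, exactly as in \eqref{cp.2}) to get $\|V(t-\tau)(\text{stuff})\|_{L^{2(k+1)}_x}\lesssim (t-\tau)^{-\gamma_k/p}\|\text{stuff}\|_{L^2_x}$, where ``stuff'' is $\partial_x(v^{k+1})$ or $D_x^s\partial_x(v^{k+1})$ etc.; the $L^2_x$-norm of the nonlinearity is then bounded by products of the $L^{2(k+1)}_x$-type norms appearing in \eqref{sp.01} via Hölder, picking up $\tau^{-(k+1)\gamma_k/p}$. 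The upshot in every case is a bound by
\begin{equation*}
\|v\|_{\mathcal{X}_T^s}^{k+1}\cdot t^{\gamma_k/p}\int_0^t (t-\tau)^{-\gamma_k/p}\tau^{-(k+1)\gamma_k/p}\,d\tau
= \|v\|_{\mathcal{X}_T^s}^{k+1}\, t^{\gamma_k/p}\cdot t^{\,1-\gamma_k/p-(k+1)\gamma_k/p}\, B\!\left(1-\tfrac{\gamma_k}{p},\,1-\tfrac{(k+1)\gamma_k}{p}\right),
\end{equation*}
and the resulting power of $t$ is $1-\frac{(k+1)\gamma_k}{p} = 1-\frac{3k+2}{2p} = \frac{2p-3k-2}{2p}=\omega_k$, which is positive exactly because $p>\frac32 k+1$; this is also precisely the condition making both Beta-function arguments positive so the time integral converges. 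Bounding $t^{\omega_k}\le T^{\omega_k}$ and taking the supremum over $t\in[0,T]$ gives \eqref{mm.04}. The proof of \eqref{mm.05} is identical in structure: the nonlinearity $(\partial_x u)^{k+1}$ already has no derivative outside, so there one writes $\int_0^t V(t-\tau)(\partial_x u)^{k+1}d\tau$ and estimates $(\partial_x u)^{k+1}$ in $L^2_x$ by $\|\partial_x u(\tau)\|_{L^{2(k+1)}_x}^{k+1}$, while the $\mathcal{Y}_T^s$-norm only asks to control $\partial_x$ and $D_x^s\partial_x$ of the Duhamel term (no $0$-th order $L^{2(k+1)}$ term), which is why $s\ge 0$ rather than $s\ge -1$ suffices — the low-frequency factor is now $\langle\xi\rangle^s|\xi|$ with $s\ge 0$, automatically bounded.

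The main obstacle is the low-frequency analysis in the $H^s$-estimate for negative $s$ down to $s\ge -1$: one must verify that $\sup_{|\xi|\le M}\langle\xi\rangle^s|\xi|\,e^{(t-\tau)\Phi(\xi)}$ is bounded uniformly (not just $(t-\tau)^{-\gamma_k/p}$-bounded), and that the bookkeeping of the derivative — deciding whether to place $D_x^{-1}$ on the input or absorb it through $J^s$ — is done so that exactly one derivative is ``spent'' generating the integrable singularity $(t-\tau)^{-\gamma_k/p}$ and no derivative lands uncontrolled on the product $v^{k+1}$, whose factors are only in $L^{2(k+1)}_x$, not in any space with extra smoothness. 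A secondary point requiring care is checking that $v\in\mathcal{X}_T^s$ genuinely controls $\|D_x^s\partial_x(v^{k+1})\|_{L^2_x}$: by the Leibniz/fractional-Leibniz rule this needs $\|v\|_{L^{2(k+1)}_x}$, $\|\partial_x v\|_{L^{2(k+1)}_x}$ and $\|D_x^s\partial_x v\|_{L^{2(k+1)}_x}$ — which is exactly why all three of these norms were built into \eqref{sp.01}, and the estimate closes.
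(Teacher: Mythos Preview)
Your treatment of the three weighted $L^{2(k+1)}$-components is correct and in fact more explicit than the paper's: applying Lemma~\ref{linear1x} to $V(t-\tau)$, picking up $(t-\tau)^{-\gamma_k/p}$, and then estimating the nonlinearity in $L^2$ via H\"older/fractional Leibniz leads to the Beta integral you wrote, with the right power $t^{\omega_k}$.

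The gap is in your $H^s$-component. You claim that the multiplier $\langle\xi\rangle^s|\xi|\,e^{(t-\tau)\Phi(\xi)}$ is bounded in $L^\infty_\xi$ by $C(t-\tau)^{-\gamma_k/p}$. This is false: at high frequencies $\langle\xi\rangle^s|\xi|\sim|\xi|^{s+1}$, and Lemma~\ref{xav6.6} gives only $\|\langle\xi\rangle^{s+1}e^{(t-\tau)\Phi(\xi)}\|_{L^\infty}\lesssim (t-\tau)^{-(s+1)/p}$, an exponent that \emph{depends on $s$}. For $s\ge p-1$ this exponent is $\ge 1$ and your time integral diverges; even for smaller $s$ the resulting power of $T$ is $\omega_k-(s+1)/p$, which need not be positive. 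So your scheme of ``spending'' $J^s\partial_x$ entirely on the semigroup smoothing and leaving the bare product $v^{k+1}$ in $L^2$ does not close for all $s\ge -1$.

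The fix is exactly what the paper does for this component: do \emph{not} use any smoothing of $V(t-\tau)$ here. Simply bound $\|V(t-\tau)g\|_{H^s}\le C\|g\|_{H^s}$ via Lemma~\ref{lem2.2}, and then estimate $\|\partial_x(v^{k+1})(\tau)\|_{H^s}\sim\|D_x^{s+1}(v^{k+1})(\tau)\|_{L^2}$ by the fractional chain rule (valid since $s+1\ge 0$) as
\[
\|D_x^{s+1}(v^{k+1})\|_{L^2}\lesssim \|v\|_{L^{2(k+1)}}^{k}\,\|D_x^{s+1}v\|_{L^{2(k+1)}}\lesssim \tau^{-(k+1)\gamma_k/p}\|v\|_{\mathcal{X}_T^s}^{k+1}.
\]
This gives $\int_0^T\tau^{-(k+1)\gamma_k/p}\,d\tau=C\,T^{\omega_k}$ with no $(t-\tau)$-singularity at all. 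You already have this ingredient (it is precisely the ``secondary point'' you flag at the end), so you only need to redirect it to the $H^s$-piece. The same remark applies verbatim to the $H^s$-component in \eqref{mm.05}.
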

\begin{proof}
We start by  proving \eqref{mm.04}. First consider the case when $s\ge0$.
Using the definition of  $\mathcal{X}_T^s$--norm and  Lemma \ref{linear1x}  for $s\ge0$ (see \eqref{sp.01}), we get, for the first component
\begin{equation}\label{rev-1}
  \sup_{t\in[-T, T]}\Big\|\int_0^tV(t-\tau) \partial_x(v^{k+1})(\tau)d\tau\Big\|_{{{H}_T^s}}\leq   \sup_{t\in[-T, T]}\int_0^{|t|}\|\partial_x(v^{k+1})(\tau)\|_{H^s}d\tau.
  \end{equation}
 To estimate  the other components of  $\mathcal{X}_T^s$--norm, we use Minkowski's integral inequality and  Corollary \ref{cor-rev-1}. For the sake of brevity and clarity, we give details for the  fourth component only, because others follow similarly.
  \begin{equation}\label{rev-02}
\begin{split}
  |t|^{\frac{\gamma_k}p}\Big\|D_x^s\partial_x\int_0^tV(t-\tau) \partial_x(v^{k+1})(\tau) &d\tau\Big\|_{L^{2(k+1)}}
  = |t|^{\frac{\gamma_k}p}\Big\|\int_0^tD_x^s\partial_xV(t-\tau) \partial_x(v^{k+1})(\tau)d\tau\Big\|_{L^{2(k+1)}}\\
	&\leq C\,|t|^{\frac{\gamma_k}p}\int_0^{|t|}\Big\|D_x^s\partial_xV(t-\tau) \partial_x(v^{k+1})(\tau)\Big\|_{L^{2(k+1)}}d\tau\\
	&\leq C \,|t|^{\frac{\gamma_k}p} \int_0^{|t|}  \dfrac{1}{\,|t-\tau|^{\frac{\gamma_k}p}}\|\partial_x(v^{k+1})(\tau)\|_{H^s}d\tau.
\end{split}
\end{equation}

 Combining \eqref{rev-1},  \eqref{rev-02} and similar estimates for the other components,  we obtain 
\begin{equation}\label{mmxx.06}
\begin{split}
  \Big \|\int_0^t &V(t-\tau) \partial_x(v^{k+1})(\tau)d\tau\Big\|_{{\mathcal{X}_T^s}}
  \leq \sup_{t\in[-T, T]}\int_0^{|t|} \left(\| \partial_x(v^{k+1})\|_{L^2} +\|D_x^s \partial_x(v^{k+1})\|_{L^2} \right) d\tau\\
  & +C\sup_{t\in[-T, T]} |t|^{\frac{\gamma_k}p} \int_0^{|t|}  \dfrac{1}{\,|t-\tau|^{\frac{\gamma_k}p}} \left(\| \partial_x(v^{k+1})\|_{L^2} +\|D_x^s \partial_x(v^{k+1})\|_{L^2} \right)d\tau\\
	&\leq C\!\!\sup_{t\in[-T, T]}|t|^{\frac{\gamma_k}p}\!\left[\int_0^{|t|} \!\! \dfrac{1}{\,|t-\tau|^{\frac{\gamma_k}p}} \| \partial_x(v^{k+1})\|_{L^2}d\tau 
	+\int_0^{|t|}  \dfrac{1}{\,|t-\tau|^{\frac{\gamma_k}p}}\|D_x^{\tilde{s}} (v^{k+1})\|_{L^2} d\tau\right]\\
	&=:I_1+I_2,
\end{split}
\end{equation}
	where we used  $\tilde{s} =1+s$ and the estimate
$$
1=\dfrac{|t-\tau|}{|t-\tau|}	\leq \dfrac{|t|+|\tau|}{|t-\tau|} \leq \dfrac{2|t|}{|t-\tau|}, \quad 0\leq \tau <|t|.
$$

	In what follows, we will obtain an estimate for $I_2$. 
	Now, using fractional chain rule (see Tao \cite{Tao1} (A.15) page 338), for $\tilde{s} \geq 0$, we have
	\begin{equation}\label{mx.001}
	\begin{split}
I_2& \leq C\!\!\sup_{t\in[-T, T]}|t|^{\frac{\gamma_k}p}\int_0^{|t|}  \dfrac{1}{\,|t-\tau|^{\frac{\gamma_k}p}} \|v^k\|_{L^{\frac{2(k+1)}{k}}}\|D_x^{\tilde{s}} v\|_{L^{2(k+1)}} d\tau\\
& \leq C\,\| v\|_{\mathcal{X}_T^s}^{k+1}\sup_{t\in[-T, T]}|t|^{\frac{\gamma_k}p}\int_0^{|t|} \dfrac{1}{\,|t-\tau|^{\frac{\gamma_k}p}} \frac{1}{\tau^{\frac{k\gamma_k}p}}  \frac{1}{\tau^{\frac{\gamma_k}p}}d\tau\\
& \leq C\,\| v\|_{\mathcal{X}_T^s}^{k+1}\sup_{t\in[-T, T]}|t|^{\frac{\gamma_k}p}\int_0^{|t|}  \dfrac{1}{\,|t-\tau|^{\frac{\gamma_k}p}} \frac{1}{\tau^{\frac{3k+2}{2p}}}d\tau \\
& \leq C_{p,k} \,\| v\|_{\mathcal{X}_T^s}^{k+1} T^{\omega_k}.
\end{split}
\end{equation}

The estimate for $I_1$ will follow from the one of $I_2$ by considering $s=0$. In fact,
\begin{equation}\label{mx.002}
	\begin{split}
I_1& \leq C\!\!\sup_{t\in[-T, T]}|t|^{\frac{\gamma_k}p}\int_0^t  \dfrac{1}{\,|t-\tau|^{\frac{\gamma_k}p}} \|v^k\|_{L^{\frac{2(k+1)}{k}}}\|\partial_x v\|_{L^{2(k+1)}} d\tau\\
& \leq C\,\| v\|_{\mathcal{X}_T^s}^{k+1} \sup_{t\in[-T, T]}|t|^{\frac{\gamma_k}p} \int_0^t  \dfrac{1}{\,|t-\tau|^{\frac{\gamma_k}p}} \frac{1}{\tau^{\frac{3k+2}{2p}}}d\tau \\
&\leq C_{p,k} \,\| v\|_{\mathcal{X}_T^s}^{k+1} T^{\omega_k}.
\end{split}
\end{equation}

Inserting estimates \eqref{mx.001} and \eqref{mx.002} in \eqref{mmxx.06} we  obtain the required estimate \eqref{mm.04} in the case $s\geq 0$.

 Next we consider the case $-1\le s<0$.  Using a similar argument as above, one gets
\begin{equation}\label{mmxx.06neg}
\begin{split}
  \Big\|\int_0^tV(t-\tau) \partial_x(v^{k+1})(\tau)d\tau\Big\|_{{\mathcal{X}_T^s}}
  &\leq C\!\! \sup_{t\in[-T, T]}|t|^{\frac{\gamma_k}p} \int_0^{|t|} \dfrac{1}{\,|t-\tau|^{\frac{\gamma_k}p}}\|\partial_x(v^{k+1})(\tau)\|_{H^s}d\tau\\
	&\leq C\!\! \sup_{t\in[-T, T]}|t|^{\frac{\gamma_k}p} \int_0^{|t|}  \dfrac{1}{\,|t-\tau|^{\frac{\gamma_k}p}}\|D_x^s \partial_x(v^{k+1})\|_{L^2} d\tau\\
	&\leq C\!\! \sup_{t\in[-T, T]}|t|^{\frac{\gamma_k}p} \int_0^{|t|}  \dfrac{1}{\,|t-\tau|^{\frac{\gamma_k}p}}\|D_x^{\tilde{s}} (v^{k+1})\|_{L^2} d\tau\\
	& \leq C_{p,k} \,\| v\|_{\mathcal{X}_T^s}^{k+1} T^{\omega_k},
\end{split}
\end{equation}
where $\tilde{s} =1+s \ge  0$, for $s\geq -1$.

 Now, we move to prove the estimate \eqref{mm.05}. By using \eqref{linear1} from Lemma \ref{mm.1} and fractional chain rule as in \eqref{mx.001}, for $s\geq 0$, we get
\begin{equation}\label{mm.06}
\begin{split}
 \Big\|\int_0^t &V(t-\tau) (\partial_xu)^{k+1}(\tau)d\tau\Big\|_{{\mathcal{Y}_T^s}}\\
&\leq C\!\!\sup_{t\in[-T, T]}|t|^{\frac{\gamma_k}p}\int_0^{|t| }  \dfrac{1}{\,|t-\tau|^{\frac{\gamma_k}p}} \left(\| (u_x)^{k+1}\|_{L^2} +\|D_x^s (u_x)^{k+1}\|_{L^2} \right) d\tau\\
& \leq C\!\!\sup_{t\in[-T, T]}|t|^{\frac{\gamma_k}p}\int_0^{|t|}  \dfrac{1}{\,|t-\tau|^{\frac{\gamma_k}p}}  \left(\| u_x\|_{L^{2(k+1)}}^{k+1} +\| (u_x)^{k}\|_{L^{\frac{2(k+1)}{k}}}\|D_x^s (u_x)\|_{L^{2(k+1)}} \right) d\tau\\
& \leq C\| u\|_{\mathcal{Y}_T^s}^{k+1}\sup_{t\in[-T, T]}|t|^{\frac{\gamma_k}p}\int_0^{|t|}  \dfrac{1}{\,|t-\tau|^{\frac{\gamma_k}p}} 
\left[ \frac{1}{\tau^{\frac{{(k+1)}\gamma_k}p}}+\frac{1}{\tau^{\frac{k\gamma_k}p}}\,  \frac{1}{\tau^{\frac{\gamma_k}p}}\right]d\tau\\
& \leq C\| u\|_{\mathcal{Y}_T^s}^{k+1}\sup_{t\in[-T, T]}|t|^{\frac{\gamma_k}p}\int_0^{|t|}    \dfrac{1}{\,|t-\tau|^{\frac{\gamma_k}p}} \frac{1}{\tau^{\frac{3k+2}{2p}}}d\tau \\
& \leq C_{p,k} \,\| u\|_{\mathcal{Y}_T^s}^{k+1} T^{\omega_k},
\end{split}
\end{equation}
as required.  In the last inequality of \eqref{mm.06} the condition $p> \frac32 k+1$ has been used.
\end{proof}

Now, we move to prove some more estimates that are useful in our analysis.

\begin{lemma}\label{xav6.6}
Let $\theta \ge 0$, $p>0$ and $\tau \in [-1, 1]\backslash\{0\}$. Then we have
\begin{equation}\label{eq0.99}
\| \langle \xi \rangle^\theta e^{|\tau| \Phi(\xi)}\|_{L^\infty_\xi}\lesssim  \dfrac{1}{|\tau|^{\frac{\theta}{p}}}.
\end{equation}
\end{lemma}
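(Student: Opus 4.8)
The plan is to split the supremum into the region of low frequencies $|\xi|\le M$ and high frequencies $|\xi|>M$, where $M$ is the constant from Lemma \ref{xav4}. On the low-frequency region we bound crudely: by Lemma \ref{lem2.2} (or directly by the boundedness of $\Phi$ from above) we have $e^{\tau\Phi(\xi)}\le e^{C_M}$, and $\langle\xi\rangle^\theta\le (1+M)^\theta$, so $\|\langle\xi\rangle^\theta e^{\tau\Phi(\xi)}\|_{L^\infty_\xi(|\xi|\le M)}\le C_{M,\theta}$. Since $\tau\in(0,1]$ we have $1\le \tau^{-\theta/p}$, hence this contribution is trivially $\lesssim \tau^{-\theta/p}$, which is the desired form.

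For the high-frequency region I would use \eqref{eq0.6x34} from Lemma \ref{xav4}, namely $|\Phi(\xi)|\ge |\xi|^p/2$ for $|\xi|>M$, so that $e^{\tau\Phi(\xi)}=e^{-\tau|\Phi(\xi)|}\le e^{-\tau|\xi|^p/2}$. Also, for $|\xi|>M>1$ one has $\langle\xi\rangle^\theta=(1+|\xi|)^\theta\le (2|\xi|)^\theta=2^\theta|\xi|^\theta$. Therefore it suffices to estimate $\sup_{|\xi|>M}|\xi|^\theta e^{-\tau|\xi|^p/2}$, and in fact $\sup_{\xi\in\R}|\xi|^\theta e^{-\tau|\xi|^p/2}$. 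Here I perform the change of variables $x=\tau^{1/p}\xi$, which gives
\begin{equation*}
\sup_{\xi\in\R}|\xi|^\theta e^{-\tau|\xi|^p/2}=\frac{1}{\tau^{\theta/p}}\sup_{x\in\R}|x|^\theta e^{-|x|^p/2}.
\end{equation*}
The remaining supremum $\sup_{x\in\R}|x|^\theta e^{-|x|^p/2}$ is a finite constant depending only on $\theta$ and $p$, since $|x|^\theta e^{-|x|^p/2}$ is continuous, vanishes at $x=0$ (when $\theta>0$; it equals $1$ when $\theta=0$) and decays to $0$ as $|x|\to\infty$ because $p>0$ forces exponential decay to dominate the polynomial growth. (One can even write the maximum explicitly: it is attained at $|x|^p=\theta/p$, giving the constant $(\theta/(ep))^{\theta/p}$ when $\theta>0$, but the explicit value is not needed.) Combining the two regions yields \eqref{eq0.99}.

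The proof is essentially routine; the only point requiring a little care is ensuring the two cases $\theta=0$ and $\theta>0$ are both covered (for $\theta=0$ the bound is just $1\le \tau^{0}=1$, consistent with $\Phi$ bounded above), and making sure the hypothesis $p>\tfrac32 k+1$ is used only through $p>0$ so that the Gaussian-type integral/supremum converges — in fact no finer information about $p$ is needed here, the stronger hypothesis is merely inherited from the ambient setting. I do not anticipate any genuine obstacle; the scaling substitution $x=\tau^{1/p}\xi$ is the one idea that makes the $\tau^{-\theta/p}$ factor appear naturally.
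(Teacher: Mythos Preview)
Your proposal is correct and follows essentially the same route as the paper: split into $|\xi|\le M$ and $|\xi|>M$ using Lemma~\ref{xav4}, bound the low-frequency part by a constant (which is $\lesssim \tau^{-\theta/p}$ since $\tau\in(0,1]$), and on the high-frequency part use \eqref{eq0.6x34} together with the rescaling $x=\tau^{1/p}\xi$ to extract the factor $\tau^{-\theta/p}$ and leave a $\tau$-independent supremum. Your write-up is in fact slightly more careful than the paper's (you keep the factor $1/2$ from \eqref{eq0.6x34} and justify the finiteness of $\sup_x|x|^\theta e^{-|x|^p/2}$ explicitly), but the argument is the same.
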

\begin{proof}

Considering $M$ large as in Lemma \ref{xav4}, one can obtain
\begin{equation}\label{eq2.03}
\|\langle \xi\rangle^\theta e^{|\tau|\Phi(\xi)}\|_{L^\infty}  \le \|\langle \xi\rangle^{\theta} e^{|\tau|\Phi(\xi)}\|_{L^\infty(|\xi| \leq M)} + \|\langle \xi\rangle^{\theta} e^{|\tau|\Phi(\xi)}\|_{L^\infty(|\xi| > M)} =:A_1 +A_2.
\end{equation}

For $\tau\in [-1, 1]\backslash\{0\}$ and  $\frac{\theta}{p}\ge 0$, we have 
\begin{equation}\label{eq2.04}
A_1\leq C_Me^{|\tau| C_M} \lesssim \frac1{|\tau|^{\frac{\theta}{p}}}.
\end{equation}

In what follows, we  obtain and estimate for the high frequency part $A_2$. From  \eqref{eq0.6x34} we  have
\begin{equation}\label{eq2.05}
A_2\lesssim \dfrac{\|\, |\,|\tau|^{\frac{1}{p}}\, \xi|^{\theta} e^{-| |\tau|^{\frac{1}{p}}\,\xi|^p}\|_{L^\infty(|\xi| > M)}}{|\tau|^{\frac{\theta}{p}}}  \lesssim \frac1{|\tau|^{\frac{\theta}{p}}},
\end{equation}
where in the last inequality $x^{\frac{\theta}p}e^{-x} \le C_{\theta, p}$, if $x\ge0$ has been used. The proof of the lemma follows inserting \eqref{eq2.04} and \eqref{eq2.05} in \eqref{eq2.03}.
\end{proof}

\begin{proposition}\label{xav7smoth}
Let $s>-1$, $p>  \frac{3k}2+1$,  $k\in \mathbb{N}$. There exists  $ \mu:=\mu(s,p,k) >0$ such that  if
\begin{equation}\label{xav3smoth}
\|f\|_{\mathcal{Z}_T^s}:=\sup_{t\in[-T,T]\backslash\{0\}}\Big\{\|f(t)\|_{H^s}+t^{\frac{\gamma_k}{p}}\|f(t)\|_{L^{2(k+1)}}\Big\}< \infty, 
\end{equation}
then  the application
\begin{equation}\label{eq0.9x3smoot}
t\mapsto \mathcal{L}(f)(t):=\int_0^t V(t-t')\partial_x(f^{k+1}) (t') dt',\qquad 0< |t|\le T\le 1,
\end{equation}
is continuous from $[-T, T]\backslash\{0\}$ to  $H^{s+\mu}$. 
\end{proposition}

\begin{proof}
We prove this proposition considering  $0<t\leq T \leq 1$. The case $-1\leq -T\leq t <0$ follows similarly. First we show that $\mathcal{L}(f)(t) \in H^{s+\mu}(\R)$ $\forall$ $f\in Z_T^s$.
We do this considering two different cases.

\noindent
{\bf Case I, $-1\le s<p-1$:} Let $0< t\le T\le 1$ and $f\in Z_T^s$. In this case, we have
\begin{equation}\label{eq:2.42}
\begin{split}
\|\mathcal{L}(f)(t)\|_{H^{s+\mu}} &=\Big\|\langle \xi \rangle^{s+\mu}\int_0^{t}\left( e^{i(t-t')\xi^3+|t-t'|\Phi(\xi)} \right) \,i \xi  \widehat{f^{k+1}}(\xi,t')dt'\Big\|_{L^2}\\
&  \le \int_0^{t} \|\langle \xi \rangle^{s+\mu}\,\xi\, e^{|t-t'|\Phi(\xi)}  \|_{L^\infty}  \| \widehat{f^{k+1}}(\cdot,t')\|_{L^2} dt'\\
&  \le \int_0^{t} \|\langle \xi \rangle^{s+\mu+1} e^{|t-t'|\Phi(\xi)}  \|_{L^\infty}  \| f(\cdot,t')\|_{L^{2(k+1)}}^{k+1} dt'
\\
&\lesssim  \|f \|_{\mathcal{Z}_T^s}^{k+1}  \int_0^{t}  \dfrac{1}{|t-t'|^{\frac{1+s+\mu}p}}\,\dfrac{1}{|t'|^{\frac{3k+2}{2p}}}\, dt' <\infty,
\end{split}
\end{equation}
where $0<\mu< p-1-s$, the definition of  $\mathcal{Z}_T^s$-norm, Minkowski's inequality and  inequality \eqref{eq0.99} from Lemma \ref{xav6.6} are used. Note that, the condition $s\geq -1$ guarantees $s+\mu+1\geq 0$ to use \eqref{eq0.99}.

\noindent
{\bf Case II, $s\ge p-1$:}  Similarly as above,  using the fact that, in this case $H^s$ is an algebra, we obtain
\begin{equation}\label{eq:243}
\begin{split}
\|\mathcal{L}(f)(t)\|_{H^{s+\mu}} &=\Big\|\langle \xi \rangle^{s+\mu}\int_0^{t}\left( e^{i(t-t')\xi^3+|t-t'|\Phi(\xi)} \right) \,i \xi \widehat{f^{k+1}}(\xi,t')dt'\Big\|_{L^2}\\
&\le \int_0^{t} \|\langle \xi \rangle^{\mu} \, \xi \,e^{|t-t'|\Phi(\xi)}  \left( J^{s} (f^{k+1})(\cdot,t') \right)^{\wedge}(\xi) \|_{L^2}dt'\\
&  \le \int_0^{t} \|\langle \xi \rangle^{\mu}  \xi \left( e^{|t-t'|\Phi(\xi)} \right) \|_{L^\infty}  \|  J^{s}( f^{k+1})(\cdot,t')\|_{L^{2}} dt'
\\
&  \le \int_0^{t} \|\langle \xi \rangle^{\mu+1}\left( e^{|t-t'|\Phi(\xi)} \right) \|_{L^\infty}  \|f(\cdot,t') \|_{H^s}^{k+1} dt'\\
&\lesssim  \|f \|_{\mathcal{Z}_T^s}^{k+1}  \int_0^{t}  \dfrac{1}{|t-t'|^{\frac{1+\mu}p}}\, dt' <\infty.
\end{split}
\end{equation}
The last estimate in \eqref{eq:243} follows by choosing $0<\mu<p-1$.

To prove the continuity part,  let $t_0\in (0,T]$ fixed,  and let $f\in  Z_T^s$. We will prove that
\begin{equation}
\lim_{t \to t_0}\|\mathcal{L}(f)(t)-\mathcal{L}(f)(t_0)\|_{H^{s+\mu}}=0.
\end{equation}
We divide the proof in two different cases.

\noindent
{\bf Case a, $0<t\leq t_0$:} In this case, using \eqref{eq0.9x3smoot} and the additive property of the integral, we obtain
\begin{equation}\label{gmp-1}
\begin{split}
\|\mathcal{L}(f)(t)&-\mathcal{L}(f)(t_0)\|_{H^{s+\mu}}
=\Big\|\int_0^{t_0} V(t_0-t')\partial_x(f^{k+1})(t')d\tau-\int_0^t V(t-t')\partial_x(f^{k+1})(t')dt' \Big\|_{H^{s+\mu}}\\
&\le  
\Big\|\int_0^{t}\left( V(t_0-t')-V(t-t') \right)\partial_x(f^{k+1})(t')dt'\Big\|_{H^{s+\mu}}+\Big\|\int_{t}^{t_0} V(t_0-t') \partial_x(f^{k+1})(t')dt'\Big\|_{H^{s+\mu}}\\
&=: I_1(t,t_0)+I_2(t,t_0).
\end{split}
\end{equation}

We write 
\begin{equation}
\begin{split}
I_1(t,t_0)= & \Big\|\langle \xi \rangle^{s+\mu}\int_0^{t}\left( e^{i(t_0-t')\xi^3+(t_0-t')\Phi(\xi)}-e^{i(t-t')\xi^3+(t-t')\Phi(\xi)} \right) \,i \xi \widehat{f^{k+1}}(\xi,t')dt'\Big\|_{L^2},
\end{split}
\end{equation}
and note that  $$( e^{i(t_0-t')\xi^3+(t_0-t')\Phi(\xi)}-e^{i(t-t')\xi^3+(t-t') \Phi(\xi)}) =\int_t^{t_0} (i\xi^3+ \Phi(\xi) )e^{i(\tau-t')\xi^3+(\tau-t')\Phi(\xi)} d\tau.$$

We analyse $I_1$, considering two different cases.

\noindent
{\bf Case a.1, $-1\le s< \min\{ 2(p-2), p-1\}$:}  In this case, we have
\begin{equation}\label{gn-1}
\begin{split}
I_1(t,t_0)= &\Big \|  \int _0^t \int_t^{t_0}\langle \xi \rangle^{s+\mu}(i\xi^3+ \Phi(\xi) )e^{i(\tau-t')\xi^3+(\tau-t')\Phi(\xi)} \xi \widehat{f^{k+1}}(\xi,t')d\tau dt'\Big\|_{L^2}\\
& \le \Big\|  \int _0^t \int_t^{t_0} \langle \xi \rangle^{s+\mu+1}|i\xi^3+ \Phi(\xi) |e^{(\tau-t')\Phi(\xi)} |\widehat{f^{k+1}}(\xi,t')| d\tau dt'\Big\|_{L^2} \\
& \le \int _0^t \int_t^{t_0}\|  \langle \xi \rangle^{s+\mu+1}|i\xi^3+ \Phi(\xi) |e^{(\tau-t')\Phi(\xi)} \widehat{f^{k+1}}(\xi,t')  \|_{L^2} d\tau dt'\\ 
&\lesssim \int _0^t \int_t^{t_0} \|  \langle \xi \rangle^{s+\mu+r}e^{(\tau-t')\Phi(\xi)} \widehat{f^{k+1}}(\xi,t')  \|_{L^2}d\tau dt'\\
& \lesssim  \|f \|_{\mathcal{Z}_T^s}^{k+1} \int _0^t  \int_t^{t_0}  \dfrac{1}{|\tau-t'|^{\frac{\mu+s+r}p}}\,\dfrac{1}{|t'|^{\frac{3k+2}{2p}}}\, d\tau dt'\\
& =:   \|f \|_{\mathcal{Z}_T^s}^{k+1}J,
\end{split}
\end{equation}
where $r=\max\{3,p\}+1$. In the domain of integration one has $0\le t' \le t \le \tau\le t_0$, and consequently $|\tau -t'| =(\tau-t')\ge t-t'=|t-t'|$. Therefore, we can estimate $J$ as follows
\begin{equation}\label{gn-2}
\begin{split}
J&=\int _0^t \,\dfrac{1}{|t'|^{\frac{3k+2}{2p}}}\, \int_t^{t_0}  \dfrac{1}{|\tau-t'|^{\frac{\mu+s+r}p}}\, d\tau dt'\\
& \sim \int _0^t \,\dfrac{1}{|t'|^{\frac{3k+2}{2p}}}\,\left( \dfrac{1}{(t_0-t')^{\alpha_{\mu}}}- \dfrac{1}{(t-t')^{\alpha_{\mu}}} \right) dt',
\end{split}
\end{equation}
where
\begin{equation}\label{gn-3}
\alpha_{\mu}=\alpha_{\mu}(s,p)=
\begin{cases}
\frac{\mu+s+1}{p}, \quad\quad&\mathrm{ if} \quad p\geq 3 \quad \mathrm{and}\quad 0<\mu<p-1-s,\\
\frac{\mu+s+4-p}{p}, \quad &{\mathrm{if}} \quad  p\leq 3\quad \mathrm{and}\quad  0<\mu<2(p-2)-s.
\end{cases}
\end{equation}
Note that, for the choices of $p$, $s$ and $\mu$, we have $ \alpha_{\mu} <1$.

Now, making change of variables $t'=t_0x$ and $t'=tx$ respectively in the first and second integrals in \eqref{gn-2} and taking in account that $\alpha_{\mu}<1$,  we obtain
\begin{equation}\label{gn-33}
\begin{split}
J & \sim  t_0^{1-\alpha_{\mu}-\frac{3k+2}{2p}} \int _0^{t/t_0} \,\dfrac{1}{|x|^{\frac{3k+2}{2p}}}\, \dfrac{1}{(1-x)^{\alpha_{\mu}}} dx- t^{1-\alpha_{\mu}-\frac{3k+2}{2p}} \int _0^{1} \,\dfrac{1}{|x|^{\frac{3k+2}{2p}}}\, \dfrac{1}{(1-x)^{\alpha_{\mu}}} dx \to 0,
\end{split}
\end{equation}
 whenever $t \to t_0$.

\noindent
{\bf Case a.2, $s\ge   \min\{ 2(p-2), p-1\}$:}  In this case, with a similar argument as above,  and the fact that $H^s$ is an algebra, one can get
\begin{equation}\label{gn-4}
\begin{split}
I_1(t,t_0)
& \lesssim  \|f \|_{\mathcal{Z}_T^s}^{k+1} \int _0^t  \int_t^{t_0}  \dfrac{1}{|\tau-t'|^{\frac{\mu+r}p}}\, d\tau dt'\\
& :=   \|f \|_{\mathcal{Z}_T^s}^{k+1}\tilde{J}.
\end{split}
\end{equation}

Similarly to $J$ in \eqref{gn-2}, we get
\begin{equation}\label{gn-5}
\begin{split}
\tilde{J} & \sim \int _0^t \,\left( \dfrac{1}{(t_0-t')^{\alpha_{\mu}}}- \dfrac{1}{(t-t')^{\alpha_{\mu}}} \right) dt',
\end{split}
\end{equation}
where
\begin{equation}\label{g-6}
\alpha_{\mu}=\alpha_{\mu}(s,p)=
\begin{cases}
\frac{\mu+1}{p}, \quad \quad&\mathrm{ if}  \quad p\geq 3 \quad \mathrm{and}\quad 0<\mu<p-1,\\
\frac{\mu+4-p}{p}, \quad&\mathrm{ if}  \quad   p\leq 3 \quad \mathrm{and}\quad 0<\mu<2(p-2).
\end{cases}
\end{equation}
In this case too, for the choices of $p$, $s$ and $\mu$, we have that $ \alpha_{\mu} <1$.

As in \eqref{gn-33}, making change of variables and the fact that $\alpha_{\mu}<1$, we obtain
\begin{equation}\label{ge-7}
\begin{split}
\tilde{J}
& \sim  t_0^{1-\alpha_{\mu}} \int _0^{t/t_0} \, \dfrac{1}{(1-x)^{\alpha_{\mu}}} dx- t^{1-\alpha_{\mu}} \int _0^{1} \, \dfrac{1}{(1-x)^{\alpha_{\mu}}} dx \to 0,
\end{split}
\end{equation}
 whenever $t \to t_0$.
 
 Therefore, in the light of \eqref{gn-33} and \eqref{ge-7}, we get
 \begin{equation}\label{gn-8}
I_1(t,t_0) \to 0, \qquad \textrm{if}\quad t\to t_0.
\end{equation}
Analogously, since
$$
\int_{0}^{t_0} \|V(t-t') \partial_x(f^{k+1})(t')dt'\|_{H^{s+\mu}} <\infty
$$
we get
\begin{equation}\label{gn-9}
I_2(t,t_0) \to 0, \qquad \textrm{if}\quad t\to t_0.
\end{equation}

Hence, using \eqref{gn-8} and \eqref{gn-9} in \eqref{gmp-1}, we conclude the proof of the proposition in this case.

\noindent
{\bf Case b, $0<t_0< t$:} In this case too, using \eqref{eq0.9x3smoot} and the additive property of the integral, we obtain
\begin{equation}
\begin{split}
\|\mathcal{L}(f)(t)&-\mathcal{L}(f)(t_0)\|_{H^{s+\mu}}
=\Big\|\int_0^{t_0} V(t_0-t')\partial_x(f^{k+1})(t')d\tau-\int_0^t V(t-t')\partial_x(f^{k+1})(t')dt' \Big\|_{H^{s+\mu}}\\
&\le  
\Big\|\int_0^{t_0}\left( V(t_0-t')-V(t-t') \right)\partial_x(f^{k+1})(t')dt'\Big\|_{H^{s+\mu}}+\Big\|\int_{t_0}^{t} V(t-t') \partial_x(f^{k+1})(t')dt'\Big\|_{H^{s+\mu}}\\
&=: I_1(t,t_0)+I_2(t,t_0).
\end{split}
\end{equation}
The rest follows analogously  as in {\bf Case a}, and this completes the proof of the proposition.
\end{proof}

\begin{proposition}\label{xav7sm}
Let $s\ge 0$, $p> \frac{3k}2+1$. There exists  $ \mu:=\mu(s,p,k) >0$ such that  if
\begin{equation}\label{xav3sm}
\|f\|_{\tilde{\mathcal{Z}}_T^s}:=\sup_{t\in[-T,T]\backslash\{0\}}\Big\{\|f(t)\|_{H^s}+t^{\frac{\gamma_k}{p}}\|\partial_xf(t)\|_{L^{2(k+1)}}\Big\}< \infty, 
\end{equation}
then the application
\begin{equation}\label{eq0.9x3sm}
t\mapsto\tilde{\mathcal{L}}(f)(t):=\int_0^t V(t-t')(f_x)^{k+1} (t') dt',\qquad 0< |t|\le T\le 1,
\end{equation}
is continuous from  $[-T, T]\backslash\{0\}$ to  $H^{s+\mu}$. 
\end{proposition}
\begin{proof}
The proof follows by using \eqref{eq0.99} and a similar procedure applied in the proof of Proposition 2.9.
\end{proof}

\section{Proof of the main results}\label{sec-3}

Having the linear and nonlinear estimates at hand from the previous section, now we provide proof of the main results of this work.

\begin{proof}[Proof of Theorem \ref{teorp-1}]
Consider the Cauchy problem \eqref{eq:hs} in its equivalent integral form
\begin{equation}\label{int1sma0} 
v(t)=V(t)v_{0}- \int_{0}^{t}V(t-\tau)\partial_x(v^{k+1})(\tau)d\tau,
\end{equation}
where 
$V(t)$ is the semi-group associated with the linear part given by (\ref{gV}).

Let us define an application
\begin{equation}\label{int2sma0}
  \Psi(v)(t)= V(t)u_0-\int_0^t V(t-\tau)\partial_x(v^{k+1})(\tau)d\tau.
\end{equation}

 For $s \geq -1$, $r>0$ and $0<T\leq 1$, let 
\begin{align*}
  B_r^T= \{f\in \mathcal{X}_T^s ; \,\,\|f\|_{\mathcal{X}_T^s}\leq r \},
\end{align*}
be a ball in $\mathcal{X}_T^s$ with center at origin and radius $r$. We will show that there exists $r>0$ and $0<T\leq 1$ such that the application  $\Psi$ maps  $B_r^T $ into 
$B_r^T$ and is a contraction. For this, let $v\in B_r^T$.
Using the estimates \eqref{mm.2}  and  \eqref{mm.04}, one can obtain
\begin{equation}\label{eq3.5sma0}
\begin{split}
  \|\Psi(v)\|_{\mathcal{X}_T^s} \le C\|v_0\|_{H^s}+C_{p,k} T^{\omega_k}\| v\|_{\mathcal{X}_T^s}^{k+1},
\end{split}
\end{equation}
where $\omega_k=\frac{2p-3k-2}{2p}>0$. 

 For $v\in B_r^T$ let us choose  $r=4c\|v_0\|_{H^s}$ in such a way that $cT^{\omega_k} r^k=1/4$. For this choice, one can easily obtain
 \begin{equation}\label{eq3.8sma0}
  \|\Psi(v)\|_{X_T^s}\leq \frac{r}{4}+ cT^{\omega_k} r^{k+1}\leq  \frac{r}{2}.
\end{equation}

From \eqref{eq3.8sma0} we conclude that $\Psi$ maps $B_r^T$ into itself. A
similar argument proves that $\Psi$ is a contraction.  Hence $\Psi$ has a fixed point $v$ which is a  solution of the Cauchy problem (\ref{eq:hs}) such that 
$v \in C([0,T], H^s(\R))$. The rest of the proof follows from standard argument, see for example \cite{kpv1:kpv1}.

The regularity part follows using Lemma \ref{lem-smooth} and  Proposition \ref{xav7smoth} as in \cite{XC-MP3}.
\end{proof}

\begin{proof}[Proof of Theorem \ref{teorp}]
Proof of this theorem is very similar to that of Theorem \ref{teorp-1}. In this case, we use the estimate \eqref{linear1} from Lemma \ref{mm.1} and estimate \eqref{mm.05} from Lemma \ref{mm.03} to perform the contraction mapping argument.
\end{proof}

\bigskip
{\bf Acknowledgment.} The first author thanks supports from CNPq under grants
304036/2014-5 and 481715/2012-6.  The second author acknowledges supports from FAPESP  under grants 2012/20966-4, and CNPq under grants 479558/2013-2 and  305483/2014-5.   The authors would like to thank unanimous referees whose comments helped to improve the original manuscript.


\end{document}